\newcommand{\C}{\ensuremath{\mathcal{C}}}
\newcommand{\diam}{\ensuremath{\mathrm{Diam} \; }}
 \newcommand{\graph}{\ensuremath{\C(G,X)}}
\newcommand{\zz}{\mathbb{Z}}
\newcommand{\rr}{\mathbb{R}}
\newcommand{\bu}{\mathbf{u}}
\newcommand{\bv}{\mathbf{v}}
\newcommand{\bw}{\mathbf{w}}
\newcommand{\bo}{\mathbf{0}}
\newtheorem{thm}{Theorem}[section]
\newtheorem{lemma}[thm]{Lemma}
\newtheorem{prop}[thm]{Proposition}
\theoremstyle{definition}
\newtheorem{defn}[thm]{Definition}
\newcommand{\coverpage}[3]{\thispagestyle{empty}
	\addtocounter{page}{-1}
	\null\vspace*{-1cm} \hfill\includegraphics[scale=1]{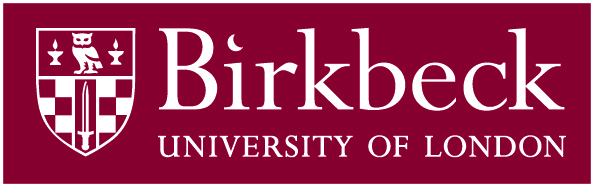} \vskip 2in
	\begin{center} \begin{minipage}{0.7\textwidth}\begin{center}\Huge\bf{#1}\end{center} \end{minipage}\end{center}  \vfill
	\begin{center} {\large By}\bigskip\\ {\large #2}\\ \end{center} \vfill 
	\framebox{\begin{minipage}{\textwidth}
			Birkbeck Mathematical Sciences Preprint Series\hfill
			Preprint Number #3 \\ \\
			\null\hfill www.bbk.ac.uk/ems/research/pure/preprints 
	\end{minipage}}
	\newpage}
\begin{document}
\coverpage{A Note on Commuting Involution Graphs in Affine Coxeter Groups}{Sarah Hart and Amal Sbeiti Clarke}{43}

\title{A Note on Commuting Involution Graphs in Affine Coxeter Groups}
\author{Sarah Hart and Amal Sbeiti Clarke}
\date{}
\maketitle


 \section{Introduction}

Let $G$ be a group and $X$ a set of involutions of $G$. The {\em commuting involution graph} $\mathcal{C}(G,X)$ is the graph whose vertex set is $X$, with vertices $x, y$ joined by an edge whenever $x$ and $y$ commute.
These graphs have been studied for a wide variety of groups, usually with $X$ being either a conjugacy class or the set of all involutions. Perhaps the most well-known example of their use was in the work of Fischer on 3-transposition groups \cite{fischer}. In a series of papers, Bates et al looked at connectedness and diameters of commuting involution graphs in the case where $X$ is a conjugacy class of involutions and $G$ is the symmetric group \cite{symmetric}, a finite Coxeter group \cite{Finite}, a linear group \cite{linear}, or a sporadic simple group \cite{sporadic}. In particular, for finite Coxeter groups they gave necessary and sufficient conditions under which the commuting involution graphs are connected, along with bounds for the diameters in the connected cases. For the symmetric group, where \graph\ is connected the diameter is at most 4. For finite Coxeter groups this bound increases to 5. In \cite{Perkins}, affine Coxeter groups of type $\tilde A_n$ were considered. Here, the diameter of any connected commuting involution graph is at most 6.\\

More recently, Hart and Sbeiti Clarke considered the remaining classical Weyl groups (see \cite{classicalaffine}, \cite{amal} and \cite{amalthesis}). They showed that if $G$ is a classical affine Weyl group, then if $\graph$ is connected, its diameter exceeds the rank of $G$ by at most 1. The obvious next question is: what happens in the exceptional affine groups? The purpose of this short note is to establish some general results for commuting involution graphs in affine Coxeter groups, and to deal with types $\tilde F_4$ and $\tilde G_2$. Types $\tilde E_6$ $\tilde E_7$ and $\tilde E_8$ are more substantial and these will be addressed in a forthcoming paper.\\

Section 2 contains preliminaries and general results. Section 3 deals with $\tilde F_4$ and $\tilde G_2$.

\section{Preliminaries and General Results}

Let $W$ be a finite Weyl group with root system $\Phi$ in a Euclidean vector space $V \cong \rr^n$, and $\Phi^{\vee}$ the set of coroots $\alpha^{\vee} = \frac{2\alpha}{\langle \alpha, \alpha\rangle}$, for $\alpha \in \Phi$. The affine Weyl group $\tilde W$ is the semidirect product of $W$ with translation group $Z$  of the coroot lattice $L(\Phi^\vee)$ of $W$. We often express roots and coroots in terms of the standard basis $\{e_1, \ldots, e_n\}$, and we will, by a slight abuse of notation, write elements of $Z$ simply as vectors -- in other words we will identify $Z$ with $L(\Phi^\vee)$. For any  $w \in \tilde W$, $w$ is written in the form $(a,\bu)$ where $a \in W$ and $\bu\in Z$. See, for example, \cite[Chapter 4]{humphreys} for more detail.

 For $a, b \in W$ and $\bu, \bv \in Z$   we have $$(a,\bu)(b,\bv) = (ab,\bu^b + \bv).$$
We have $(a,\bu)^{-1} =(a^{-1}, -\bu^{a^{-1}})$.
In $\tilde W$, 
the element $(a,\bu)$ is conjugate to $(b,\bv)$ via some $(g,\mathbf{w})$ if and only if \begin{align*} (b,\bv) &= (a, \bu)^{(g,\bw)}
= (g^{-1}a g, \bu^g + \bw - \bw^{g^{-1} a g}).
\end{align*}
   
The reflections of $\tilde W$ are the affine reflections $s_{{\alpha},k}$ ($\alpha \in\Phi$, $k \in\zz$). Recall that, for $\bv$ in $V$,  \begin{align*}
s_{{\alpha}}(\bv) &= \bv - 2\frac{\langle \alpha, \bv\rangle}{\langle \alpha, \alpha\rangle}\alpha = \bv - \langle \alpha, \bv\rangle \alpha^\vee\\
s_{{\alpha},k}(\bv) &= \bv - 2\frac{(\langle \alpha, \bv\rangle - k)}{\langle \alpha, \alpha\rangle}\alpha = s_{\alpha}(\bv) + k\alpha^{\vee}.
\end{align*}If $R$ is a set of  simple reflections for $W$ and $\tilde \alpha$ is the highest root (that is, the root with the highest coefficient sum when expressed as a linear combination of simple roots), then it can be shown that $R\cup\{s_{\tilde\alpha ,1}\}$ is a set of simple reflections for $\tilde W$.\\

 Finally, we   write $\diam \mathcal{C}(G,X)$ for the diameter of $\mathcal{C}(G,X)$
           when $\mathcal{C}(G,X)$ is a connected graph, in other words the maximum distance $d(x,y)$ between any $x, y \in X$ in the graph. \\

   We begin with an observation about  connectedness. For an element $x=( a,\bu)$ in a conjugacy class $X$ of $\tilde W$, we define $ \hat x= a$. Then let $ \hat X$ be the conjugacy class of $\hat x$ in $W$. Clearly if $x, y  \in X$, then $\hat x,\hat y \in \hat X$.     
           
           \begin{lemma} \label{obvious} Suppose $x,y \in X$. If $d(\hat x,
           	\hat y) = k$, then $d(x,y) \geq k$. If $\C (W,\hat X)$  is disconnected,
           	then $\C (\tilde W, X)$ is disconnected.
           \end{lemma}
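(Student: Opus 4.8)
The plan is to exploit the natural projection $\pi \colon \tilde W \to W$, $(a,\bu) \mapsto a$, so that $\hat x = \pi(x)$. The multiplication rule $(a,\bu)(b,\bv) = (ab, \bu^b + \bv)$ shows at once that $\pi$ is a group homomorphism, and hence that it carries commuting pairs to commuting pairs: if $xy = yx$ in $\tilde W$, then $\hat x \hat y = \pi(xy) = \pi(yx) = \hat y \hat x$. This single fact drives both assertions. I would first confirm that each $\hat x$ is a genuine vertex of $\C(W, \hat X)$, i.e. a nontrivial involution, so that the graph is well defined: writing $x = (a,\bu)$, the condition $x^2 = (a^2, \bu^a + \bu) = (1,\bo)$ forces $a^2 = 1$, and if $a = 1$ then $\bu^a + \bu = 2\bu = \bo$ makes $x$ trivial; thus for $x \in X$ the projection $\hat x = a$ is a nontrivial involution lying in $\hat X$.

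For the distance inequality, I would take a geodesic $x = x_0, x_1, \dots, x_m = y$ in $\C(\tilde W, X)$, where $m = d(x,y)$ (if $x$ and $y$ lie in different components there is nothing to prove, as then $d(x,y) = \infty \geq k$). Applying $\pi$ term by term produces a sequence $\hat x = \hat x_0, \hat x_1, \dots, \hat x_m = \hat y$ in $\hat X$ in which consecutive entries commute. This is not quite a walk in $\C(W, \hat X)$, because some consecutive projections may coincide; deleting any such repetition yields a genuine walk from $\hat x$ to $\hat y$ of length at most $m$. Hence $d(\hat x, \hat y) \leq m = d(x,y)$, which is exactly $d(x,y) \geq k$.

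For the second statement I would argue the contrapositive: assuming $\C(\tilde W, X)$ is connected, I would deduce that $\C(W, \hat X)$ is connected. The extra ingredient is that $\pi$ restricts to a surjection $X \to \hat X$, which follows from the conjugation formula in the excerpt. If $x = (a_0, \bu_0) \in X$ and $a \in \hat X$, so that $a = g^{-1} a_0 g$ for some $g \in W$, then $x^{(g,\bo)} = (g^{-1} a_0 g,\; \bu_0^g) = (a, \bu_0^g)$ lies in $X$ and projects to $a$. Given any two vertices of $\C(W,\hat X)$, I would lift them to vertices of $\C(\tilde W, X)$, join these lifts by a path using connectedness, and project that path (again contracting repeated vertices) to a walk between the two chosen vertices; thus $\C(W,\hat X)$ is connected, as required.

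The argument is essentially formal once the homomorphism property of $\pi$ is in place, so I do not anticipate a serious obstacle. The only points demanding care are the two bookkeeping issues flagged above: contracting a projected walk whenever consecutive images coincide, and verifying the surjectivity $\pi(X) = \hat X$ needed for the second part. I expect the contraction step to be the one most easily overlooked, although it is entirely routine.
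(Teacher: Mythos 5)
Your proof is correct and rests on exactly the same observation as the paper's one-line argument: the projection $(a,\bu)\mapsto a$ is a homomorphism, so commuting pairs in $\tilde W$ project to commuting pairs in $W$, whence paths project to walks and disconnectedness lifts back. The extra bookkeeping you supply (nontriviality of $\hat x$, surjectivity of $X\to\hat X$, contracting repeated vertices) is sound and merely makes explicit what the paper treats as immediate; in fact the contraction step is vacuous, since if $(a,\bu)\neq(a,\bv)$ are involutions in $X$ then $\bu^a=-\bu$ and $\bv^a=-\bv$ force $(a,\bu)(a,\bv)\neq(a,\bv)(a,\bu)$, so consecutive vertices on a path never share a projection.
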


           \begin{proof} The result follows immediately from the observation that if $x$ commutes with $y$ in $\tilde W$, then $\hat x$ commutes with $\hat y$ in $W$. 
           \end{proof}

   \begin{defn} \label{equiv}
   	Let $W$ be an arbitrary Coxeter group, with $R$ the set of simple reflections. Two subsets $I$ and $J$ of $R$ are {\em $W$-equivalent}
   	if there exists $w \in W$ such that $I^w = J$.
   \end{defn} 
   
   In the next result, we use the notation $w_{I}$ for the longest element of a finite standard parabolic subgroup $W_I$. 
   
   \begin{thm}[Richardson \cite{richardson}]\label{richardson}
   	
   	Let $W$ be an arbitrary Coxeter group, with $R$ the set of simple reflections. Let $g \in W$ be an
   	involution. Then there exists $I \subseteq R$ such that $w_I$ is central in $W_I$, and $g$ is conjugate to
   	$w_I$. In addition, for $I, J \subseteq R$, $w_I$ is conjugate to $w_J$ if and only if $I$ and $J$ are
   	$W$-equivalent.
   \end{thm}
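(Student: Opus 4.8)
The plan is to prove the two assertions separately, establishing first the existence of a central $w_I$ with $g$ conjugate to it, and then the conjugacy classification. For existence, I would replace $g$ by an element of minimal length in its $\tilde W$-conjugacy class (which I may still take to be an involution) and set $I = \supp(g)$, so that $g \in W_I$. The goal is then to show that minimality forces $g$ to be the longest element $w_I$ of $W_I$ and that this $w_I$ is central in $W_I$. Everything rests on one length-exchange observation: if $g$ is an involution and $s$ a simple reflection, then conjugation $g \mapsto sgs$ changes $\ell$ by $0$ or $\pm 2$, and a short root computation (using $g = g^{-1}$, so that $s$ is a left descent iff it is a right descent) shows that $sgs$ is a \emph{shorter} involution conjugate to $g$ precisely when $s$ is a descent of $g$ with $sg \neq gs$. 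Since $g$ has minimal length, no such $s$ exists; hence every descent $s$ of $g$ commutes with $g$ and satisfies $g\alpha_s = -\alpha_s$.

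For centrality, suppose $g = w_I$ has been established. The longest element induces a diagram automorphism $\sigma$ of $W_I$ with $w_I s w_I = \sigma(s)$ and $w_I \alpha_s = -\alpha_{\sigma(s)}$ for each $s \in I$. If $\sigma(s) = t \neq s$ for some $s$, then $s w_I \alpha_s = -s\alpha_t = -(\alpha_t + c\,\alpha_s)$ with $c = -\langle \alpha_t,\alpha_s^\vee\rangle \geq 0$, so $s w_I \alpha_s < 0$ and therefore $\ell(s w_I s) = \ell(w_I) - 2$; as $s w_I s$ is again an involution conjugate to $w_I$, this contradicts minimality. Hence $\sigma$ is trivial, $w_I$ acts as $-1$ on $V_I = \langle \alpha_s : s \in I\rangle$, and $w_I$ is central in $W_I$.

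The main obstacle is the intermediate claim that the minimal-length involution $g$ actually equals $w_I$, equivalently that $\supp(g)$ coincides with the descent set $J$ of $g$ (equivalently that $g$ acts as $-1$ on \emph{all} of $V_I$, not merely on $V_J$). The descent analysis above only gives that $g$ centralises $W_J$ and acts as $-1$ on $V_J$, and reduces the problem to ruling out a generator $t \in \supp(g)\setminus J$. I expect to close this either by an induction on $\ell(g)$ that strips off commuting descents using the exchange condition, or, more cleanly, by invoking the reduction of an arbitrary conjugacy class to a cuspidal class of a standard parabolic together with the fact that a cuspidal involution is elliptic, i.e.\ has no nonzero fixed vector, and an elliptic involution in $W_I$ is exactly $w_I$ with $w_I$ central. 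This step, rather than the centrality computation, is where the real work lies.

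For the classification, the ``if'' direction is straightforward: if $I^w = J$ then conjugation by $w$ carries the simple reflections $I$ of $W_I$ bijectively to those of $W_J$, hence is a length-preserving Coxeter isomorphism $W_I \to W_J$ and must send the unique longest element $w_I$ to $w_J$, giving $w_I^{\,w} = w_J$. For the ``only if'' direction I would recover the parabolic from the involution: since $w_I$ is central in $W_I$ it has no nonzero fixed vector in $V_I$ and so lies in no proper parabolic subgroup of $W_I$, whence its parabolic closure (the smallest parabolic subgroup containing it) is exactly $W_I$. Conjugate elements have conjugate parabolic closures, so $w_I$ conjugate to $w_J$ forces $W_I$ and $W_J$ to be conjugate, and the standard classification of parabolic subgroups up to conjugacy then yields that $I$ and $J$ are $W$-equivalent. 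The care here is in justifying that the parabolic closure of an elliptic element is genuinely $W_I$ and in citing the parabolic-conjugacy classification; granting these, the two directions combine to give the stated equivalence.
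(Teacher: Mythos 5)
The first thing to say is that the paper contains no proof of this statement: Theorem~\ref{richardson} is quoted verbatim from Richardson \cite{richardson}, so the only meaningful benchmark is Richardson's original argument, which your first route essentially reconstructs. Your preliminary computations along that route are correct: for an involution $g$, left and right descents coincide (since $\ell(sg)=\ell(gs)$); if $s$ is a descent then either $g\alpha_s=-\alpha_s$, which is equivalent to $sg=gs$, or $\ell(sgs)=\ell(g)-2$; and the diagram-automorphism calculation showing centrality of $w_I$ is fine (indeed redundant -- once you know a minimal-length involution equals $w_I$, every $s\in I$ is a descent and so commutes with $g$ by your own descent analysis, giving centrality immediately).

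The genuine gap is exactly the one you flag yourself: you never prove that the minimal-length involution $g$ equals $w_J$, where $J$ is its descent set, and you offer only two candidate strategies in place of an argument. The first (induction stripping off commuting descents) is the step Richardson actually carries out, and it requires real work: one must show $N(g)=\Phi_J^+$, for instance by taking a root of minimal depth in $N(g)\setminus\Phi_J$ and deriving a contradiction -- the subtle case being a simple reflection $t\notin J$ lowering that root, where commutation with $g$ is unavailable -- and then conclude $g=w_J$ because an element is determined by its inversion set. None of this appears in your sketch. The second route (cuspidal classes, ellipticity, parabolic closure) has a different defect: the theorem is stated for an \emph{arbitrary} Coxeter group, where the bilinear form may be degenerate, ``elliptic'' needs a definition, and you would first need the nontrivial facts that every involution lies in a finite parabolic subgroup (a fixed-point argument in the Tits cone) and that parabolic closures exist and are natural under conjugation -- machinery at least as heavy as the theorem itself. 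The same caveat applies to your ``only if'' direction of the classification: the eigenspace argument identifying $V_I$ as the full $(-1)$-eigenspace of $w_I$ is sound for the finite group $W_I$, but the passage from ``$W_I$ is conjugate to $W_J$'' to ``$I^w=J$ for some $w$'' is itself a theorem (via minimal double-coset representatives) that you invoke without proof, though it is at least citable. The ``if'' direction is correct and easy, as you say. Net assessment: a well-oriented sketch whose existence half has an open hole at precisely its crux, so as written it is not yet a proof.
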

  
  For the rest of this paper, $W$ will always denote a finite Weyl group with root system $\Phi$, with $\tilde W$ its corresponding affine Weyl group. We will use the convention that a Weyl group of type $\Gamma$ will be denoted $W(\Gamma)$, where $\Gamma$ is the associated Coxeter graph. 
   The next two lemmas give conditions under which reflections  
   $w$ and  $w'$ of $\tilde W$ commute.

   \begin {lemma}\label{commute1} Let $\alpha ,\beta \in \Phi$. Then $s_{\alpha}$ commutes with $s_{\beta}$ if and only if $\langle\alpha,\beta\rangle=0$ or $\alpha =\pm \beta$.
   \end{lemma}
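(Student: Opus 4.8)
The plan is to convert the commutation condition into a statement about how $s_\alpha$ acts on the root $\beta$, via the standard conjugation identity for reflections. First I would recall that each $s_\alpha$ is an orthogonal transformation of $V$, so that for any $g\in W$ one has $g\,s_\beta\,g^{-1}=s_{g(\beta)}$. Applying this with $g=s_\alpha$ and using $s_\alpha^{-1}=s_\alpha$ gives
\[
s_\alpha s_\beta s_\alpha = s_{s_\alpha(\beta)}.
\]
Hence $s_\alpha$ and $s_\beta$ commute if and only if $s_\alpha s_\beta s_\alpha = s_\beta$, that is, if and only if $s_{s_\alpha(\beta)}=s_\beta$.

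Next I would establish that for $\gamma,\delta\in\Phi$ the reflections $s_\gamma$ and $s_\delta$ coincide precisely when $\gamma=\pm\delta$. Since a reflection is determined by its $(-1)$-eigenspace, which is the line $\rr\gamma$, equality $s_\gamma=s_\delta$ forces $\gamma$ and $\delta$ to be proportional; and because $s_\alpha$ preserves the inner product, $s_\alpha(\beta)$ and $\beta$ have equal norm, so in fact $s_\alpha(\beta)=\pm\beta$. Applying this to the equivalence from the first paragraph, $s_\alpha$ and $s_\beta$ commute if and only if $s_\alpha(\beta)=\beta$ or $s_\alpha(\beta)=-\beta$.

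Finally I would evaluate both cases using $s_\alpha(\beta)=\beta-\langle\alpha,\beta\rangle\alpha^\vee$. The identity $s_\alpha(\beta)=\beta$ holds exactly when $\langle\alpha,\beta\rangle\alpha^\vee=\bo$, i.e.\ when $\langle\alpha,\beta\rangle=0$ (as $\alpha^\vee\neq\bo$). The identity $s_\alpha(\beta)=-\beta$ rearranges to $2\beta=\langle\alpha,\beta\rangle\alpha^\vee$, which forces $\beta$ to be a scalar multiple of $\alpha$, and hence $\beta=\pm\alpha$. For the converse, each stated condition produces one of these equalities directly: if $\langle\alpha,\beta\rangle=0$ then $s_\alpha(\beta)=\beta$, and if $\alpha=\pm\beta$ then $s_\alpha(\beta)=-\beta$; so both implications follow from the same chain of equivalences.

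The only delicate point is the step concluding $\beta=\pm\alpha$ from $\beta\in\rr\alpha$ in the second case: this uses that $\Phi$ is reduced, so that the only roots proportional to $\alpha$ are $\pm\alpha$. I would flag this as the place where the hypothesis that $\Phi$ is the root system of a finite Weyl group is genuinely used; everything else is formal manipulation of the reflection formulas recorded in the preliminaries.
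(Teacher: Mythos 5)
Your proof is correct, and it takes a genuinely different route from the paper's. The paper argues by direct computation: it expands $s_\alpha s_\beta(\bv)$ and $s_\beta s_\alpha(\bv)$ explicitly for arbitrary $\bv \in V$, compares the resulting expressions, and reads off that commutation holds if and only if $\langle \alpha,\beta\rangle = 0$ or $\langle \bv,\beta\rangle \alpha = \langle \bv,\alpha\rangle \beta$ for all $\bv$, at which point the root-system fact $\langle\gamma\rangle \cap \Phi = \{\pm\gamma\}$ closes the argument. You instead use the conjugation identity $s_\alpha s_\beta s_\alpha = s_{s_\alpha(\beta)}$ to reduce commutation to the geometric condition $s_\alpha(\beta) = \pm\beta$ (a Euclidean reflection is determined by the line it negates, and $s_\alpha$ preserves norms — note your application is sound even though, for arbitrary vectors, $s_\gamma = s_\delta$ only forces proportionality rather than $\gamma = \pm\delta$; the norm argument, or the fact that $s_\alpha(\beta) \in \Phi$, supplies the missing normalisation), and then you solve the two cases from $s_\alpha(\beta) = \beta - \langle\alpha,\beta\rangle\alpha^\vee$. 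Both arguments invoke reducedness of $\Phi$ at exactly one parallel moment — you to pass from $\beta \in \rr\alpha$ to $\beta = \pm\alpha$, the paper to pass from $\langle\bv,\beta\rangle\alpha = \langle\bv,\alpha\rangle\beta$ to the same conclusion — and you are right to flag this as the only place the root-system hypothesis genuinely enters. Your route is shorter and more conceptual; what the paper's computational route buys is a template that transfers verbatim to the affine analogue, Lemma \ref{commute}, where the translation terms $k\alpha^\vee$ and $l\beta^\vee$ must be tracked explicitly and the paper again proceeds by expanding both products. Your method would also handle that lemma, since conjugating one affine reflection by another yields an affine reflection, but one would then need to identify both the root and the integer parameter of the conjugate, which is roughly the same bookkeeping the paper does directly.
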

   \paragraph{Proof} Consider $\bv \in V$ and $ \alpha, \beta \in \Phi$. A quick calculation shows that
   \begin{align*}
s_{\beta}s_{\alpha}(\bv)&= \bv -   \frac{2\langle \bv, \alpha\rangle}{\langle\alpha,\alpha\rangle}\alpha - \frac{2\langle \bv, \beta\rangle}{\langle\beta,\beta\rangle}\beta +   
   \frac{4\langle \bv, \alpha\rangle \langle \alpha,\beta\rangle }{\langle\alpha,\alpha\rangle \langle \beta,\beta \rangle}\beta;\\
   s_{\alpha}s_{\beta}(\bv)&= \bv- \frac{2\langle \bv, \alpha\rangle}{\langle\alpha,\alpha\rangle}\alpha -   \frac{2\langle \bv, \beta\rangle}{\langle\beta,\beta\rangle}\beta +   
   \frac{4\langle \bv, \beta\rangle \langle \alpha,\beta\rangle }{\langle\beta,\beta\rangle \langle \alpha,\alpha \rangle}\alpha. 
   \end{align*}
   
   Hence $s_{\alpha } s_{\beta} =s_{\beta } s_\alpha$  if and only if either $\langle \alpha, \beta \rangle = 0$ or 
   $\langle \bv, \beta\rangle \alpha =\langle \bv, \alpha\rangle\beta$ for all $\bv \in V$. One of the properties of root systems is that for any root $\gamma$ we have $\langle \gamma \rangle \cap \Phi = \{\pm \gamma\}$. Therefore $s_{\alpha } s_{\beta} =s_{\beta } s_\alpha$ if and only if $\langle \alpha,\beta \rangle =0$ or $\alpha =\pm \beta$.   \qed
   \begin{lemma} \label{commute}
   For all 
  positive roots $\alpha$ and $\beta$, and all integers $k$ and $l$, the affine reflections  $s_{\alpha,k}$ and $s_{\beta,l}$ commute if and only if either $\langle\alpha,\beta\rangle=0$, or $\alpha=\beta$ and $k=l$.
   \end{lemma}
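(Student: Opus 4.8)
The plan is to realise each affine reflection as an affine transformation of $V$, separating its linear part from its translation part, so that the problem splits into two independent conditions, the first of which is already resolved by Lemma \ref{commute1}. Using the formula $s_{\alpha,k}(\bv) = s_\alpha(\bv) + k\alpha^\vee$, the map $s_{\alpha,k}$ has linear part $s_\alpha$ and constant (translation) term $k\alpha^\vee$, and likewise $s_{\beta,l}$ has linear part $s_\beta$ and translation term $l\beta^\vee$. This decomposition is the whole point: it lets us handle the extra affine freedom cleanly rather than expanding everything by hand.

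For two affine maps $\bv \mapsto A\bv + \mathbf{s}$ and $\bv \mapsto B\bv + \mathbf{t}$, composing in the two orders gives $AB\bv + A\mathbf{t} + \mathbf{s}$ and $BA\bv + B\mathbf{s} + \mathbf{t}$ respectively. Hence $s_{\alpha,k}$ and $s_{\beta,l}$ commute if and only if (i) $s_\alpha s_\beta = s_\beta s_\alpha$, and (ii) $s_\alpha(l\beta^\vee) + k\alpha^\vee = s_\beta(k\alpha^\vee) + l\beta^\vee$. Condition (i) is exactly Lemma \ref{commute1}, so it holds precisely when $\langle\alpha,\beta\rangle=0$ or $\alpha=\pm\beta$; since $\alpha$ and $\beta$ are positive, the latter forces $\alpha=\beta$. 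In particular, if (i) fails the reflections cannot commute, which immediately disposes of the case $\langle\alpha,\beta\rangle\neq 0$ with $\alpha\neq\beta$.

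It then remains to analyse condition (ii) in the two surviving cases. If $\langle\alpha,\beta\rangle=0$, then $\langle\alpha,\beta^\vee\rangle=\langle\beta,\alpha^\vee\rangle=0$, so $s_\alpha$ fixes $\beta^\vee$ and $s_\beta$ fixes $\alpha^\vee$; both sides of (ii) collapse to $k\alpha^\vee + l\beta^\vee$, so (ii) holds automatically and the reflections commute. If instead $\alpha=\beta$, then $\alpha^\vee=\beta^\vee$ and $s_\alpha(\alpha^\vee)=-\alpha^\vee$, so (ii) becomes $(k-l)\alpha^\vee = (l-k)\alpha^\vee$, that is $2(k-l)\alpha^\vee=\bo$; since $\alpha^\vee\neq\bo$ this is equivalent to $k=l$. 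Combining the three cases yields exactly the stated criterion.

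The computation is short throughout; the one step that needs genuine care is condition (ii) in the case $\alpha=\beta$, where it is essential that $s_\alpha$ negates $\alpha^\vee$, so that the two translation terms fail to agree unless $k=l$. This is precisely what distinguishes the affine setting from the linear Lemma \ref{commute1}, in which two equal reflections always commute. One could alternatively mimic the direct expansion used in Lemma \ref{commute1} and compute $s_{\alpha,k}s_{\beta,l}(\bv)$ and $s_{\beta,l}s_{\alpha,k}(\bv)$ in full, but the affine-map decomposition isolates the extra translation condition far more transparently.
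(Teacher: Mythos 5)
Your proof is correct, and although the underlying computation is the same, your logical organisation differs from the paper's in a way worth noting. The paper expands $s_{\alpha,k}s_{\beta,l}(\bv)$ and $s_{\beta,l}s_{\alpha,k}(\bv)$ in full, equates them, and then sets $\bv=\bo$ to extract the translation-term identity $l\langle\alpha,\beta^\vee\rangle\alpha^\vee = k\langle\alpha^\vee,\beta\rangle\beta^\vee$, from which it deduces the dichotomy $\alpha=\beta$ or $\langle\alpha,\beta\rangle=0$ before running the same two-case analysis you do. You instead split each affine map into linear part plus translation at the outset, so that commuting is equivalent to your conditions (i) and (ii), and the dichotomy comes from the \emph{linear} condition (i) via Lemma \ref{commute1}, with (ii) only adjudicating $k=l$ in the case $\alpha=\beta$. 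Your route buys a little extra robustness: the paper's deduction from the $\bv=\bo$ identity is vacuous when $k=l=0$ (both sides are $\bo$ and nothing about $\alpha$ and $\beta$ follows), and in that case the dichotomy genuinely comes from the linear parts, which your condition (i) supplies uniformly for all $k$ and $l$; it also makes the appeal to Lemma \ref{commute1} explicit, where the paper in effect re-derives that information inside the computation. The remaining case analysis is identical in both arguments, resting on $s_\alpha(\beta^\vee)=\beta^\vee$ when $\langle\alpha,\beta\rangle=0$, on $s_\alpha(\alpha^\vee)=-\alpha^\vee$ when $\alpha=\beta$, and on positivity of the roots to exclude $\alpha=-\beta$.
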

   \begin{proof}
   Let $\alpha$ and $\beta$ be positive roots, with $k$ and $l$ integers. Then
   \begin{align*} 
   s_{\alpha,k} s_{\beta,l}(\bv )&= s_{\alpha,k}(s_ \beta (\bv) +l\beta ^\vee) = s_\alpha s_\beta(\bv)+ s_\alpha(l\beta^\vee)+k\alpha^\vee \\
  &=s_{\alpha}s_{\beta}(\bv) + k\alpha^\vee + l\beta^{\vee} - l\langle\alpha, \beta^\vee\rangle\alpha^\vee;\\
   s_{\beta,l} s_{\alpha,k}(\bv )&= s_\beta s_\alpha(\bv)+ k\alpha^\vee + l\beta^{\vee} - k\langle\alpha^\vee, \beta\rangle\beta^\vee.
   \end{align*}
   Consequently $s_{\alpha,k} s_{\beta,l}=s_{\beta,l} s_{\alpha,k}$ precisely when, for all $\bv \in V$, we have
   \begin{align*}
   s_\alpha s_\beta(\bv)-l\langle\alpha,\beta^\vee \rangle \alpha^\vee&= s_\beta s_\alpha(\bv)-k\langle\alpha^\vee,\beta \rangle \beta^\vee.
   \end{align*}
   In particular, setting $\bv=0$ we get
   $l\langle\alpha,\beta^\vee \rangle \alpha^\vee= k\langle\alpha^\vee,\beta \rangle \beta^\vee,$ which implies that either $\alpha=\beta$ or $\langle\alpha,\beta\rangle=0$. If $\alpha=\beta$,  we get $s_{\alpha,k}s_{\alpha,l}(\bv)= \bv +(l-k)\alpha^\vee, $ whereas $s_{\alpha,l}s_{\alpha,k}(\bv)= \bv + (k-l)\alpha^\vee$. Therefore, $s_{\alpha,k}$
   commutes with $s_{\alpha,l}$ if and only if $k=l$. On the other hand, if $\alpha \neq \beta$, then $\langle \alpha,\beta\rangle=0$ and so $s_\alpha s_\beta=s_\beta s_\alpha$. Thus $s_{\alpha,k} s_{\beta,l}=s_{\beta,l} s_{\alpha,k}$. \end{proof}

  There is one case we can deal with that occurs in several groups.
  
  \begin{lemma}
  	\label{centralinv} Suppose $W$ is a finite Weyl group that has a central involution $\tilde w$. Now suppose $x$ is an involution in the corresponding affine Weyl group $\tilde W$ such that $x = (\tilde w, \bu)$ for some $\bu \in Z$, and write $X = x^{\tilde W}$, the conjugacy class in $\tilde W$ of $x$. Then $\C(\tilde W,X)$ is disconnected.
  \end{lemma}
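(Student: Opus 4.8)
The plan is to show that $\C(\tilde W, X)$ contains no edges at all while having at least two vertices, so that it is (totally) disconnected. First I would pin down the shape of the vertices. Writing $x = (\tilde w, \bu)$ and conjugating by an arbitrary $(g, \bw) \in \tilde W$, the conjugation formula of Section 2 gives $x^{(g,\bw)} = (g^{-1}\tilde w g,\, \bu^g + \bw - \bw^{g^{-1}\tilde w g})$. Since $\tilde w$ is central in $W$ we have $g^{-1}\tilde w g = \tilde w$, so every element of $X$ has the form $(\tilde w, \bv)$ for some $\bv \in Z$; only the translation part varies. Moreover each such element is conjugate to the involution $x$ and so is itself an involution, and squaring gives $(\tilde w, \bv)^2 = (\tilde w^2, \bv^{\tilde w} + \bv) = (1, \bv^{\tilde w} + \bv)$, so the involution condition is precisely $\bv^{\tilde w} = -\bv$; that is, $\bv$ lies in the $(-1)$-eigenspace of $\tilde w$ regarded as a linear map on $V$.

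Next I would compute the commuting relation between two vertices $y = (\tilde w, \bv)$ and $z = (\tilde w, \bv')$ of $X$. A direct multiplication gives $yz = (1, \bv^{\tilde w} + \bv')$ and $zy = (1, \bv'^{\tilde w} + \bv)$, so $y$ and $z$ commute if and only if $\bv^{\tilde w} - \bv = \bv'^{\tilde w} - \bv'$, equivalently $(\bv - \bv')^{\tilde w} = \bv - \bv'$, i.e.\ $\bv - \bv'$ lies in the $(+1)$-eigenspace of $\tilde w$. But from the previous step $\bv$ and $\bv'$, and hence their difference, lie in the $(-1)$-eigenspace. Since $\tilde w$ is an involution these two eigenspaces meet only in $\bo$, forcing $\bv = \bv'$ and so $y = z$. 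Thus distinct vertices of $X$ are never adjacent, and the graph has no edges.

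Finally I would verify that $|X| > 1$, so that a graph with no edges is genuinely disconnected rather than a single point. Since $\tilde w$ is a non-identity involution and $W$ acts faithfully on $V$, while $Z = L(\Phi^\vee)$ spans $V$, the element $\tilde w$ cannot fix $Z$ pointwise; choosing $\bw \in Z$ with $\bw^{\tilde w} \neq \bw$, the conjugates $x = (\tilde w, \bu)$ and $x^{(1,\bw)} = (\tilde w,\, \bu + \bw - \bw^{\tilde w})$ have distinct translation parts. Hence $|X| \geq 2$, and $\C(\tilde W, X)$ is disconnected. All the computations here are routine multiplications; the only steps requiring care are the eigenspace argument, where the hypothesis that $\tilde w$ is an involution is exactly what rules out any edges, and the confirmation that $|X| > 1$ so that the conclusion is not vacuous.
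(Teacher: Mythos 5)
Your proof is correct, and it follows the paper's overall strategy: show that no two distinct elements of $X$ commute, so the graph is totally disconnected. The one genuine point of divergence is in how the commuting condition is killed. The paper invokes the fact that the central involution $\tilde w$ acts as $-1$ on the root system and hence on $Z$, so that $xy = (\tilde w,\bu)(\tilde w,\bv) = (1, \bv - \bu)$ is a translation, which squares to the identity only when $\bu = \bv$. You avoid that fact entirely: you use only the involution condition $\bv^{\tilde w} = -\bv$ on each vertex, derive that commuting forces $(\bv - \bv')^{\tilde w} = \bv - \bv'$, and conclude $\bv = \bv'$ because the $(+1)$- and $(-1)$-eigenspaces of an involution intersect trivially. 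This buys you a little extra generality: the paper's step that $\tilde w = -1$ on $V$ relies on $W$ being irreducible (for a reducible Weyl group such as $W(A_1) \times W(A_2)$ the central involution is not $-1$ on all of $V$), whereas your eigenspace argument works for any central involution, so your proof covers the lemma exactly as stated. You also explicitly verify $|X| \geq 2$ via the conjugate $x^{(1,\bw)}$ with $\bw^{\tilde w} \neq \bw$, a point the paper leaves implicit; this is needed for ``no edges'' to genuinely imply disconnectedness, and your justification (faithfulness of the $W$-action and the fact that $Z$ spans $V$) is sound. All your computations with the semidirect product multiplication and conjugation formulas check out.
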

  
  \begin{proof}
  	Since $\tilde w$ is central in $W$, every conjugate of $x$ in $\tilde W$ has the form $y=(\tilde w, \bv)$ for some $\bu$ in $Z$. Moreover $\tilde w$ acts as $-1$ on the root system, and hence on $Z$. Now $$xy = (\tilde w,\bu)(\tilde w,\bv) = (\tilde w^2, \bu^{\tilde w} + \bv) = (1, \bv-\bu).$$
  	Thus $xy$ is translation through $\bv - \bu$. Now $x$ commutes with $y$ precisely when $(xy)^2 = 1$, and hence $x$ commutes with $y$ if and only if $\bu = \bv$, meaning $x$ does not commute with any other member of its conjugacy class. Therefore $\C(\tilde W, X)$ is completely disconnected.  
  \end{proof}

   Next we have a result which proves connectedness in certain circumstances.
	We write $x \leftrightarrow y$ to mean $xy = yx$. 
  
  \begin{prop}\label{alternative} Let $X$ be a conjugacy class of involutions in $\tilde W$ that contains $(a, \bo)$ for some involution $a$ of $W$. 
  Suppose that $\C(W,\hat X)$ is connected with diameter $d$, and further that there is an integer $k$ such that whenever $(a, \bu) \in X$, there is some $\hat x \in \hat X$ such that  $d((a, \bu),(\hat x, \bo)) \leq k$ in $\C(\tilde W, X)$. Then $\C(\tilde W, X)$ is connected with diameter at most $d+k$. 	
  \end{prop}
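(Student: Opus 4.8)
The plan is to embed a faithful copy of $\C(W,\hat X)$ inside $\C(\tilde W, X)$ and exploit its two features: it realises the diameter $d$, and it is close to every vertex. Set $S = \{(\hat x, \bo) : \hat x \in \hat X\}$. First I would check $S \subseteq X$: since $(a,\bo) \in X$ and any $\hat x \in \hat X$ has the form $\hat x = g^{-1} a g$ with $g \in W$, the conjugation formula gives $(a, \bo)^{(g, \bo)} = (g^{-1} a g, \bo) = (\hat x, \bo)$, so each $(\hat x, \bo)$ lies in $X$. Next, from $(\hat x, \bo)(\hat y, \bo) = (\hat x \hat y, \bo)$ one sees that two elements of $S$ commute exactly when $\hat x \hat y = \hat y \hat x$; hence $\hat x \mapsto (\hat x, \bo)$ is a graph isomorphism from $\C(W,\hat X)$ onto the subgraph of $\C(\tilde W, X)$ induced on $S$. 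In particular any two elements of $S$ are joined by a path of length at most $d$ lying wholly in $S$, which is also a path in $\C(\tilde W, X)$, and $(a,\bo) \in S$.

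The second step is to show that \emph{every} vertex lies within distance $k$ of $S$. The hypothesis supplies this only for vertices of the special shape $(a,\bu)$, so I would upgrade it by conjugating within $(W,\bo)$. Given an arbitrary $z = (c, \bw) \in X$, write $c = g^{-1} a g$ with $g \in W$; then $z^{(g^{-1}, \bo)} = (a, \bw^{g^{-1}})$ has first coordinate $a$, so by hypothesis there is $\hat x \in \hat X$ with $d\bigl(z^{(g^{-1}, \bo)}, (\hat x, \bo)\bigr) \le k$. Because conjugation by any element of $\tilde W$ preserves commuting and permutes $X$, it is a distance-preserving automorphism of $\C(\tilde W, X)$; applying conjugation by $(g, \bo)$ sends $(\hat x, \bo)$ to $(g^{-1}\hat x g, \bo) \in S$ and returns $z$, whence $d\bigl(z, (g^{-1}\hat x g, \bo)\bigr) \le k$. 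Thus some element of $S$ is within distance $k$ of $z$, for every $z \in X$.

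For the main argument, take arbitrary $x, y \in X$. The key trick, needed to reach $d+k$ rather than the naive $d+2k$, is to use vertex-transitivity to place one endpoint exactly in $S$ at no cost. Since $X$ is a single conjugacy class and $(a,\bo) \in X$, choose $g \in \tilde W$ with $y^g = (a, \bo)$; then $d(x,y) = d\bigl(x^g, (a,\bo)\bigr)$ because conjugation by $g$ is a graph automorphism. Writing $z = x^g$, the second step yields $s \in S$ with $d(z, s) \le k$, and the first step gives $d\bigl(s, (a,\bo)\bigr) \le d$ as both lie in $S$. The triangle inequality then delivers $d(x,y) = d\bigl(z, (a,\bo)\bigr) \le d(z,s) + d\bigl(s, (a,\bo)\bigr) \le k + d$, proving both connectedness and the diameter bound.

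I expect the main obstacle to be conceptual rather than computational: one must resist routing both $x$ and $y$ into $S$, which costs $k$ on each side and only yields $d+2k$, and instead notice that conjugacy acts transitively, so a single automorphism can be spent to carry $y$ onto the distinguished vertex $(a,\bo) \in S$, leaving only $x$ to be pushed into $S$. The one place demanding care is the second step's bookkeeping with the semidirect-product conjugation formula, namely verifying that conjugating by $(g^{-1},\bo)$ normalises the first coordinate to $a$ while keeping the element inside $X$; everything else is routine.
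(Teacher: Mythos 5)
Your proof is correct and takes essentially the same route as the paper: conjugate an arbitrary element of $X$ by a $W$-element $(g,\bo)$ to normalise its first coordinate to $a$, use the hypothesis to get within $k$ of the embedded copy $S = \{(\hat x,\bo) : \hat x \in \hat X\}$ of $\C(W,\hat X)$, and finish by travelling within $S$ using the triangle inequality. The one point where you go beyond the paper is your explicit closing appeal to vertex-transitivity (conjugating $y$ onto $(a,\bo)$): the paper only establishes $d((b,\bv),(a,\bo)) \leq d+k$ for all $(b,\bv) \in X$, which taken literally bounds the eccentricity of one vertex (hence naively gives diameter $2(d+k)$), and your observation that conjugation acts transitively on $X$ as graph automorphisms is exactly the implicit step needed to convert that eccentricity bound into the stated diameter bound $d+k$.
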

  
  \begin{proof} Let $(b,\bv) \in X$. Then $b$ is conjugate to $a$ in $W$. That is, there is some $g \in W$ with $b = a^g$, meaning that $(b, \bv)^{g^{-1}}$ is of the form $(a, \bu)$ for an appropriate $\bu$. By hypothesis then, there is a path $(a, \bu) = x_0 \leftrightarrow x_1 \cdots \leftrightarrow x_m = (\hat x,\bo)$ of elements $x_i$ of $X$, with $m \leq k$. But this implies that there is a path $x_0^g \leftrightarrow x_1^g \leftrightarrow \cdots \leftrightarrow x_m^g$ in the commuting involution graph.  Writing $y = \hat x^g$, and noting that $x_0^g = (b,\bv)$, we obtain $d((b,\bv), (y, \bo)) \leq k$. Now $\C(W,\hat X)$ is connected with diameter $d$. Thus $d(y, a) \leq d$ in $\C(W,\hat X)$. Hence $d((y,\bo),(a, \bo)) \leq d$ in $\C(\tilde W, X)$. Therefore $d((b,\bv),(a,\bo))\leq d+k$ and since this holds for all $(b,\bv)$ in $X$, we deduce that $\C(\tilde W, X)$ is connected with diameter at most $d+k$. \end{proof}
  
  \begin{lemma}\label{lem28sep}
  	Suppose $a \in W_I$ for some $I \subseteq R$. If $(a,\bu)$ is an involution, with $\bu = \sum_{r \in R} u_r \alpha_r$, then $a$ is an involution and $u_r = 0$ whenever $r \notin I$. Moreover, let $J$ be the set of reflections $s$ of $R$ that commute with all $r$ in $I$. If $\bv = \sum_{s \in J} v_s\alpha_s$, then $\bv^{b} = \bv$ for all $b \in W_I$. 
  \end{lemma}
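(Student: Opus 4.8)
The plan is to extract the involution condition directly from the group law and then feed it into the orthogonal decomposition of $V$ determined by the parabolic subgroup $W_I$. First I would square the element: from $(a,\bu)(b,\bv)=(ab,\bu^b+\bv)$ we get $(a,\bu)^2=(a^2,\bu^a+\bu)$, so $(a,\bu)$ is an involution precisely when $a^2=1$ and $\bu^a=-\bu$. In particular $a$ is an involution -- it cannot be the identity, since $(1,\bu)^2=(1,2\bu)$ would force $\bu=\bo$ and hence $(a,\bu)=1$.

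The structural heart of the argument is the splitting $V=V_I\oplus V_I^{\perp}$, where $V_I=\mathrm{span}_{\rr}\{\alpha_r:r\in I\}$; this is an orthogonal direct sum because the form is positive definite. I would record two facts about $W_I$, both checked on the generators $s_{\alpha_r}$ ($r\in I$): $W_I$ preserves $V_I$ (as $s_{\alpha_r}(\alpha_t)=\alpha_t-\langle\alpha_t,\alpha_r^{\vee}\rangle\alpha_r\in V_I$ for $t\in I$), and $W_I$ fixes $V_I^{\perp}$ pointwise (as $s_{\alpha_r}(\bw)=\bw$ whenever $\bw\perp\alpha_r$). Since $a\in W_I$, both summands are $a$-invariant; writing $\bu=\bu_1+\bu_2$ with $\bu_1\in V_I$ and $\bu_2\in V_I^{\perp}$ and comparing components in $\bu^a=-\bu$ gives $\bu_1^a=-\bu_1$ and $\bu_2=-\bu_2$, so $\bu_2=\bo$ and $\bu\in V_I$. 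As the simple roots $\{\alpha_r:r\in R\}$ form a basis of $V$, the expansion $\bu=\sum_{r\in R}u_r\alpha_r$ is unique, and membership in $V_I$ forces $u_r=0$ for every $r\notin I$.

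For the final claim I would first identify the $W_I$-fixed subspace as exactly $V_I^{\perp}$: it is fixed pointwise by the second fact above, and conversely any fixed vector has its $V_I$-component fixed, but $W_I$ acts on $V_I$ as a reflection group whose roots span $V_I$, so its only fixed vector there is $\bo$. It then remains to show $\bv\in V_I^{\perp}$. For $s\in J$ and $r\in I$ the reflections $s$ and $r$ commute, so by Lemma \ref{commute1} either $\alpha_s=\pm\alpha_r$ or $\langle\alpha_s,\alpha_r\rangle=0$; taking $s\neq r$ rules out the first alternative and yields $\alpha_s\perp\alpha_r$. Hence every $\alpha_s$ with $s\in J$ is orthogonal to all of $V_I$, so $\bv=\sum_{s\in J}v_s\alpha_s\in V_I^{\perp}$ and therefore $\bv^b=\bv$ for all $b\in W_I$.

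I expect the one delicate point -- and hence the place to concentrate -- to be this last orthogonality step: passing from ``commutes'' to ``orthogonal roots'' via Lemma \ref{commute1} is valid only for \emph{distinct} simple reflections, so one must be sure to invoke it with $s\neq r$ (a simple reflection lying in $I$ and commuting with all of $I$ would send its own root $\alpha_s$ to $-\alpha_s$ and so break the invariance). Everything else follows directly from the orthogonal, $W_I$-invariant decomposition $V=V_I\oplus V_I^{\perp}$ together with the fact that $W_I$ has no nonzero fixed vector in $V_I$.
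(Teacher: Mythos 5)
Your proof is correct, and its overall skeleton matches the paper's: square the element to obtain $a^2=1$ and $\bu^a=-\bu$, confine the support of $\bu$ to $I$, then deduce the final invariance from orthogonality of the $J$-roots to the $I$-roots. Where you genuinely diverge is the middle step. The paper argues inductively from the reflection formula $\bv^{r}=\bv-\langle\bv,\alpha_r\rangle\alpha_r^{\vee}$ that $\bu^a=\bu-\mathbf{x}$ for some $\mathbf{x}\in\langle\alpha_r : r\in I\rangle$, so that $\bu^a+\bu=2\bu-\mathbf{x}=\bo$ forces $\bu$ into that span directly; this is a purely formal computation needing nothing about the bilinear form. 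You instead use the orthogonal decomposition $V=V_I\oplus V_I^{\perp}$ with $W_I$ fixing $V_I^{\perp}$ pointwise and compare components in $\bu^a=-\bu$; this invokes slightly more structure (positive definiteness, or at least the invariant splitting) but gives a cleaner conceptual picture and the extra information $\bu_1^a=-\bu_1$, and your identification of the full $W_I$-fixed space as $V_I^{\perp}$, while not needed, is harmless. Two of your side remarks actually improve on the paper: you verify $a\neq 1$ (the paper passes over this in silence), and your caveat about applying Lemma \ref{commute1} only with $s\neq r$ pinpoints a genuine subtlety that the paper's own proof also silently assumes --- if some $s\in I$ commuted with all of $I$, then $s\in J$ but $\alpha_s^{\,s}=-\alpha_s$, so the asserted orthogonality $\langle\alpha_r,\alpha_s\rangle=0$ for all $r\in I$, $s\in J$ fails at $r=s$; both proofs, yours and the paper's, implicitly require $J\cap I=\emptyset$, which is the situation in the lemma's intended applications.
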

  
  \begin{proof}
  	We have $(a,\bu)(a,\bu) = (a^2, \bu^a + \bu)$. If $(a,\bu)$ is an involution, then $a$ is an involution and $\bu^a + \bu =\bo$. Now $a$ is a product of elements of $I$. We have $\bv^{r} = \bv - \langle \bv, \alpha_r\rangle \alpha_r^{\vee}$ for all $\bu \in Z$ and $r \in R$. Hence, inductively, $\bu^a =  \bu - \mathbf{x}$ for some $\mathbf{x} \in \langle \alpha_r : r \in I \rangle$. So $\bu^a + \bu = 2\bu - \mathbf{x}$. For this to equal zero, clearly $\bu \in \langle \alpha_r : r \in I \rangle$. That is, $u_r = 0$ whenever $r \notin I$.  For the second part, observe that for any $r \in I$ and $s \in J$ we have $\langle \alpha_r, \alpha_s\rangle = 0$. Therefore $\bv^r = \bv$ for all $r \in I$. Hence $\bv^b = \bu$ for all $b \in W_I$.
  \end{proof}
  
  \begin{lemma}\label{4.3.5} Let $(a,\bu)$ be an involution in $\tilde W$, with $X= (a,\bu)^{\tilde W}$. Suppose $b$ commutes with $a$ in $W$, where $b\in a^W$. Then there is $(b,\bv )\in X$ such that $(a,\bu) \leftrightarrow (b,\bv)$.
  \end{lemma}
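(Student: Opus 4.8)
The plan is to realise the required element as a conjugate $(a,\bu)^{(g,\bw)}$ and to choose the translation part $\bw$ so that the commuting condition holds. Since $b\in a^W$, fix $g\in W$ with $b=g^{-1}ag$; then by the conjugation formula, for every $\bw\in Z$ the element
\[ (a,\bu)^{(g,\bw)} = (b,\; \bu^g + \bw - \bw^b) \]
lies in $X$ and has first coordinate $b$, so I will search for $\bv$ within this family. Using the multiplication rule, $(a,\bu)$ and $(b,\bv)$ commute precisely when $ab=ba$ (true by hypothesis) and $\bu^b+\bv=\bv^a+\bu$; rearranging, the commuting condition is $\bv-\bv^a=\bu-\bu^b$.

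Next I would reinterpret this condition through the common eigenspaces of $a$ and $b$. As $a,b$ are commuting orthogonal involutions of $V$, the space is the orthogonal sum of their four simultaneous $\pm1$-eigenspaces; write $V_-^a,V_-^b$ for the $(-1)$-eigenspaces and $V_{--}=V_-^a\cap V_-^b$ for the common one, and for $\bx\in V$ let $\bx_{--}$ be its $V_{--}$-component. Because $(a,\bu)$ is an involution, $\bu^a=-\bu$, so $\bu\in V_-^a$; likewise any $(b,\bv)\in X$ is an involution, so $\bv\in V_-^b$. A short computation with the eigenspace decomposition then gives $\bu-\bu^b=2\bu_{--}$ and $\bv-\bv^a=2\bv_{--}$, so the commuting condition collapses to the single requirement $\bv_{--}=\bu_{--}$: the two translation parts must agree in the common $(-1)$-eigenspace.

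Finally, writing $\bv=\bu^g+\bw-\bw^b$ and taking $V_{--}$-components gives $(\bw-\bw^b)_{--}=2\bw_{--}$, so the condition becomes $2\bw_{--}=\bu_{--}-(\bu^g)_{--}$, and the plan is to solve this for some $\bw\in Z$. Put $\Lambda=\{\bz_{--}:\bz\in Z\}$, the image of the coroot lattice under projection to $V_{--}$. Since $\bu-\bu^g\in Z$, its component $\bu_{--}-(\bu^g)_{--}$ lies in $\Lambda$; what is needed is that it lies in $2\Lambda$, for then a suitable $\bw$ exists. I expect this divisibility (parity) statement in the coroot lattice to be the main obstacle, since it is not formal: a generic element of $Z$ need not project into $2\Lambda$. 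I would attack it using the freedom to replace $g$ by $rg$ for $r\in C_W(a)$, which preserves $b=g^{-1}ag$ but alters $(\bu^g)_{--}$; the cleanest outcome is to arrange $g$ so as to fix $V_{--}$ pointwise, whence $(\bu^g)_{--}=\bu_{--}$ and one may take $\bw=\bo$, $\bv=\bu^g$. Once such $\bw$ is found, $(b,\bv)=(a,\bu)^{(g,\bw)}$ lies in $X$ and commutes with $(a,\bu)$, as required.
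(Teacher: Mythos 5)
Your reduction is correct as far as it goes, and it is in fact the same strategy as the paper's: realise the candidate as $(a,\bu)^{(g,\bw)}=(b,\bu^g+\bw-\bw^b)$ and choose $\bw$ to force commuting. Your commuting condition $\bv-\bv^a=\bu-\bu^b$ is right, and the eigenspace reformulation $\bv_{--}=\bu_{--}$ is an accurate and tidy restatement of it. The genuine gap is that you never close the argument: everything hinges on solving $2\bw_{--}=\bu_{--}-(\bu^g)_{--}$ with $\bw\in Z$, and you explicitly defer this (``I expect this divisibility\dots to be the main obstacle'', ``I would attack it\dots''). Your suggested fix --- replace $g$ by $rg$ with $r\in C_W(a)$ so that $g$ fixes $V_{--}$ pointwise --- is only a hope; you give no reason such a conjugating element exists. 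As written, the proposal is a plan for a proof, not a proof.

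It is worth seeing how the paper dispatches exactly this point: it takes the explicit choice $\bw=\tfrac12(\bu-\bu^g)$, so that (using $\bu^a=-\bu$ and $\bu^{gb}=\bu^{ag}=-\bu^g$) the vector part collapses to $\bv=\tfrac12(\bu-\bu^b)$ --- in your language precisely $\bv=\bu_{--}$ --- and then verifies $(a,\bu)(b,\bv)=(b,\bv)(a,\bu)=(ab,\tfrac12(\bu+\bu^b))$ by direct computation. That is, the paper solves your equation exactly, but over $V$ rather than over $Z$: it does not check that $\bw$ (equivalently $\bv$) lies in the coroot lattice, which is needed for $(g,\bw)\in\tilde W$, and indeed for $(b,\bv)$ to be an element of $\tilde W$ at all. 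So your instinct that the integrality is ``not formal'' is sound, and this is precisely the step the paper leaves implicit. For instance, in type $\tilde A_3$ with $a=s_{e_1-e_2}s_{e_3-e_4}$, $\bu=e_1-e_2$ and $b=s_{e_1-e_3}s_{e_2-e_4}$, the paper's formula gives $\bv=\tfrac12(e_1-e_2-e_3+e_4)$, which is not in the root lattice; the conclusion of the lemma is nevertheless rescued by a better choice of conjugating element (here $g=s_{e_2-e_3}$, $\bw=\bo$, giving $\bv=e_1-e_3$, whose $V_{--}$-component is again $\bu_{--}$) --- exactly the kind of adjustment of $g$ you propose but do not carry out. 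In short: your approach is essentially the paper's approach stopped one step short, and the step at which you stop is the one the paper settles by its explicit $\bw$, modulo a lattice-membership check that the paper itself omits and that your framework shows is the real content of the lemma.
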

  \begin{proof}
  Since $b$ is conjugate in $W$ to $a$, there is $g
 \in W$ with $b=g^{-1}ag$.
 Let $\bw=\frac{1}{2}(\bu-\bu^g)$. Then
 \begin{align*}
 (a,\bu)^{(g,\bw)}&=
 (g^{-1}ag,{\bu}^g+\bw-\bw^{g^{-1}ag})
 =(b, {\bu}^g+\bw-\bw^b). 
   \end{align*} 
Set  $  \bv = {\bu}^g+\bw-\bw^b$. We claim that   $(a,\bu) \leftrightarrow (b,\bv)$. Certainly 
$(b,\bv)\in X$. Note that since $(a,\bu)$ is an involution, $\bu^a=-\bu$; also since $g^{-1}ag=b$ we have $gb=ag$ and so $\bu^{gb}=\bu^{ag}=-
 \bu^g$. Thus
\begin{align*}
  \bv& = {\bu}^g+\bw-\bw^b =\bu^g + \textstyle\frac{1}{2}(\bu-\bu^g)-
  \textstyle\frac{1}{2}(\bu^b-\bu^{g^b})\\
 &=\bu^g + \textstyle\frac{1}
 {2}\bu- \textstyle\frac{1}
  {2}\bu^g- \textstyle\frac{1}
   {2}\bu^b- \textstyle\frac{1}
    {2}\bu^g\\ & =\textstyle\frac{1}
       {2}(\bu-\bu^b).
 \end{align*} 
 Now
 \begin{align*} 
  (a,\bu)(b,\bv)&=(ab,\bu^b+\bv ) =(ab, \bu^b+\textstyle\frac{1}
         {2}(\bu-\bu^b)) =(ab, \textstyle\frac{1}
           {2}(\bu+\bu^b))\\  
           (b,\bv)(a,\bu)&=(ba,\bv^a+\bu)  
           =(ab,  \textstyle\frac{1}
            {2}\bu^a- \textstyle\frac{1}
           {2}\bu^{ba}+\bu)
        =(ab,  \textstyle\frac{1}
          {2}(\bu-\bu^{ab}))
       =(ab, \textstyle\frac{1}
                 {2}(\bu+\bu^b)).   \end{align*} 
                 Thus  $(a,\bu) \leftrightarrow (b,\bv)$, as required.
                 \end{proof}
  \begin{prop}\label{alternative2} Let $X$ be a conjugacy class of involutions in $\tilde W$ containing $(a, \bu)$. 
    Suppose there is an integer $k$ such that whenever  $(a, \bu') \in X$, we have $d((a,\bu'),(a, \bu)) \leq k$, and also that $\C(W,\hat X)$ is connected with diameter $d$. Then $\C(\tilde W, X)$ is connected with diameter at most $d + k$. 	
    \end{prop}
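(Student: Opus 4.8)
The plan is to follow the template of Proposition~\ref{alternative}, routing every vertex to an element whose $W$-part is exactly $a$ and then invoking the distance hypothesis; the tool that drives the whole argument is Lemma~\ref{4.3.5}, which lifts a single commuting edge in $W$ to one in $\tilde W$. Fix an arbitrary $(b,\bv)\in X$. Since $(b,\bv)$ is $\tilde W$-conjugate to $(a,\bu)$, its $W$-part $b$ is $W$-conjugate to $a$, so $b\in a^W=\hat X$. Because $\C(W,\hat X)$ is connected of diameter $d$, there is a walk $b=c_0\leftrightarrow c_1\leftrightarrow\cdots\leftrightarrow c_m=a$ in $\C(W,\hat X)$ with $m\le d$ and every $c_i\in\hat X$.

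Next I would lift this walk one edge at a time. Suppose inductively we have produced $(c_i,\bw_i)\in X$ together with a path $(b,\bv)\leftrightarrow\cdots\leftrightarrow(c_i,\bw_i)$ in $\C(\tilde W,X)$ (the base case $i=0$ being $(c_0,\bw_0)=(b,\bv)$). Since $(c_i,\bw_i)\in X$ we have $X=(c_i,\bw_i)^{\tilde W}$; moreover $c_{i+1}$ commutes with $c_i$ in $W$ and $c_{i+1}\in c_i^W$, both $c_i,c_{i+1}$ lying in the single class $\hat X$. Hence Lemma~\ref{4.3.5} applies to $(c_i,\bw_i)$ and yields some $(c_{i+1},\bw_{i+1})\in X$ with $(c_i,\bw_i)\leftrightarrow(c_{i+1},\bw_{i+1})$, extending the path. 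After $m$ steps we reach $(c_m,\bw_m)=(a,\bw_m)\in X$, so $d((b,\bv),(a,\bw_m))\le m\le d$ in $\C(\tilde W,X)$.

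Finally, the endpoint $(a,\bw_m)$ has $W$-part $a$, so by hypothesis $d((a,\bw_m),(a,\bu))\le k$; the triangle inequality then gives $d((b,\bv),(a,\bu))\le d+k$. As $(b,\bv)$ was arbitrary, every vertex is within distance $d+k$ of $(a,\bu)$, whence $\C(\tilde W,X)$ is connected of diameter at most $d+k$, exactly as in Proposition~\ref{alternative}.

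The one step needing attention -- and the main, if modest, obstacle -- is checking that the hypotheses of Lemma~\ref{4.3.5} genuinely hold at every stage of the lift, namely that consecutive $W$-parts $c_i,c_{i+1}$ both commute and are conjugate in $W$. This is automatic here: conjugacy is free because all the $c_i$ sit in the one class $\hat X=a^W$, commuting is precisely what an edge of $\C(W,\hat X)$ encodes, and the only thing to track is that the lift terminates at an element with $W$-part equal to $a$, which is exactly what activates the distance bound $k$.
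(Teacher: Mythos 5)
Your proof is correct and takes essentially the same route as the paper: the paper likewise picks a geodesic $b \leftrightarrow \cdots \leftrightarrow a$ in $\C(W,\hat X)$ of length at most $d$, lifts it to a path in $\C(\tilde W, X)$ ending at some $(a,\bu')$ via Lemma~\ref{4.3.5}, and then invokes the hypothesis $d((a,\bu'),(a,\bu)) \leq k$. Your edge-by-edge induction merely makes explicit the lifting step that the paper compresses into a single appeal to Lemma~\ref{4.3.5}.
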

  	
  \begin{proof} Let $(b,\bv) \in X$.
  By hypothesis  $\C(W,\hat X)$ is connected with diameter $d$, meaning $d(a,b) \leq d$. Hence, by Lemma \ref{4.3.5}, there is a corresponding path 
   $(b,\bv)
    \leftrightarrow  \cdots \leftrightarrow (a,\bu')$, for appropriate $\bu'$, in $\C(\tilde W, X)$ of length at most $d$.
    By hypothesis $d((a,\bu' ), (a,\bu )) \leq k$. Therefore, $d((b,\bv), (a,\bu))\leq d+k$. \end{proof}

  \section{Types $\tilde F_4$ and $\tilde G_2$} \label{sec:f4tilde}
  
  Let $W$  be of type $F_4$, with associated root system $\Phi$ in $\rr^4$ and simple roots $\{\alpha_1, \alpha_2, \alpha_3, \alpha_4\}$. The root system $\Phi$ consists of the 24 long roots $\pm e_i \pm e_j$ ($1 \leq i<j \leq 4$) and 24 short roots: eight of the form $\pm e_i$ and sixteen of the form $\frac{1}{2}(\pm e_1 \pm e_2 \pm e_3 \pm e_4)$.  We may set $\alpha_1= \frac{1}{2}{(e_1-e_2-e_3-e_4)}, \alpha_2=e_4, \alpha_3=e_3-e_4$ and  $\alpha_4=e_2-e_3$. The highest root $\tilde \alpha$ is then $e_1 + e_2$. Then $W = \langle s_{\alpha_1}, s_{\alpha_2}, s_{\alpha_3}, s_{\alpha_4}\rangle$ and for the simple reflections of $\tilde W$ we can take $r_i = (s_{\alpha_i}, 0)$ for $i \in \{1, 2,3, 4\}$ and $r_5 = (s_{\tilde\alpha},\tilde\alpha^{\vee})$.  The Coxeter graph for $\tilde W$ is as shown in Figure \ref{fig:f4tilde}.  (The subgroup $\langle r_1, r_2, r_3, r_4\rangle$ is of course isomorphic to $W$ and we may for convenience identify it with $W$ on occasion.)

  \begin{figure}[h!]  \begin{center}
  \linethickness{0.4pt}\unitlength 0.8mm
  \begin{picture}(58.00,2)(5,10)
  \put(10,10.00){\line(1,0){12}}
  \put(22,10.80){\line(1,0){12}}
  \put(34,10.00){\line(1,0){24}}
  \put(22,9.2){\line(1,0){12}}
  \put(10,10.00){\circle*{2.00}}
  \put(22,10.00){\circle*{2.00}}
  \put(34,10.00){\circle*{2.00}}
  \put(46,10.00){\circle*{2.00}}
  \put(58,10.00){\circle*{2.00}}
  \put(10,6){\makebox(0,0)[cc]{$r_1$}}
  \put(22,6){\makebox(0,0)[cc]{$r_2$}}
  \put(34,6){\makebox(0,0)[cc]{$r_3$}}
  \put(46,6){\makebox(0,0)[cc]{$r_4$}}
  \put(58,6){\makebox(0,0)[cc]{$r_5$}}
  \end{picture}\end{center}
  \caption{Coxeter graph for type $\tilde F_4$} \label{fig:f4tilde}
  \end{figure}
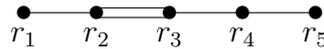
  
  Information about the commuting involution graphs of $W$ was obtained in \cite{Finite}. There are seven conjugacy classes of involutions in $W$. Let $a = (s_{\alpha_2}s_{\alpha_3})^2$. Then $\C(W,X)$ is connected with diameter 2. Apart from this, and the graph consisting of just the central involution, all the other commuting involution graphs are disconnected. See \cite{Finite} for more details.

    \renewcommand{\arraystretch}{1.1}
   \begin{table}[h!] \begin{center}
    	\begin{tabular}{c|c|c}
    		Graph & Representative $I$ & Underlying class in $W$ \\\hline
    		$A_1$ & $\{r_1\}$  & $A_1$  \\
    		$A_1$ & $\{r_3\}$  & $A_1$  \\
    		$A_1^2$ & $\{r_1, r_3\}$ & $A_1^2$ \\
    		$A_1^2 $ & $\{r_3, r_5\}$ & $B_2$  \\
    		$B_2$ & $\{ r_2, r_3\}$ & $B_2$  \\
    		$B_3 $ & $\{r_1, r_2, r_3\}$  & $B_3$ \\ 
    		$B_3 $ & $\{r_2, r_3, r_4\}$  & $B_3$  \\
    		$A_1^3$ & $\{r_1, r_3, r_5\}$ & $B_3$  \\
    		$B_2 \times A_1$ & $\{r_2,r_3,r_5\}$ & $B_3$  \\
    		$F_4$ & $\{r_1, r_2, r_3, r_4\}$ & $F_4$  \\
    		$B_4$ & $\{r_2, r_3, r_4, r_5\}$ & $F_4$\\
    		$B_3 \times A_1$ & $\{r_1, r_2, r_3, r_5\}$ & $F_4$  \\
    	\end{tabular} \caption{Conjugacy classes in $\tilde F_4$}\label{tab:conjf4tilde}\end{center}
    \end{table}
  
  Now let us consider the affine group $\tilde W$. By Theorem \ref{richardson}, every involution conjugacy class corresponds to a standard parabolic subgroup $W_I$ with a central involution $w_I$. We can identify the possibilities by finding subgraphs of the Coxeter graph for $\tilde F_4$ which correspond to Coxeter groups having nontrivial centres. Table \ref{tab:conjf4tilde} shows, for each subgraph giving rise to a distinct involution conjugacy class $X$, a representative $I \subseteq \{r_1, \ldots, r_5\}$ for which $w_I \in X$, along with the name, in the third column, of the Coxeter graph for the underlying class $\hat X$ in $W$. To determine the Coxeter graph corresponding to $\hat X$, note that if $I \subseteq W$, then $\hat X$ has the same graph as $X$. If $x = (a,\bu)$ is a product of $k$ reflections in $\tilde W$, then $a$ must be a product of $k$ reflections in $W$, so, for example, the conjugacy class of $\tilde W$ corresponding to the $B_4$ graph must have the class corresponding to $F_4$ as its underlying class in $W$. The only instance where this does not immediately tell us the type of the underlying class is the case of $A_1^2$ when $I = \{r_3, r_5\}$. Here, the underlying class might be type $B_2$ or type $A_1^2$. Note that $r_3 = s_{e_3 - e_4}$ and $r_5 = s_{e_1 + e_2,1}$. So the underlying class is the conjugacy class of $s_{e_3 - e_4}s_{e_1 + e_2}$ in $W$. One can check that $r_1r_4r_3r_2r_3r_4(e_1 + e_2) = e_3 + e_4$ and $r_1r_4r_3r_2r_3r_4(e_3 - e_4) = e_3 - e_4$. Hence $(s_{e_3 - e_4}s_{e_1 + e_2})^{r_4r_3r_2r_3r_4r_1} = s_{e_3 - e_4}s_{e_3 + e_4} = s_{e_3}s_{e_4} = r_3r_2r_3r_2 = (r_3r_2)^2$. Consequently the underlying conjugacy class in this case is of type $B_2$.

  \begin{thm}
  	\label{thmf4} Let $\tilde W$ be of type $\tilde F_4$, with graph shown in Figure \ref{fig:f4tilde}. If $X$ is the conjugacy class of $(r_2r_3)^2$ or $r_3r_5$ in $\tilde F_4$, then $\C(\tilde W, X)$ is connected with diameter at most 4. Otherwise, $\C(\tilde W, X)$ is disconnected.  
  \end{thm}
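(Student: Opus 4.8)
The plan is to treat all twelve involution classes of $\tilde W$ listed in Table \ref{tab:conjf4tilde} and to sort them according to the type of their underlying class $\hat X$ in $W$, exploiting the fact recorded above that in $W = W(F_4)$ the only connected commuting involution graphs are the one on the $B_2$ class (of diameter $2$) and the trivial graph on the central class.

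The disconnected cases split in two. For the seven classes whose underlying class is of type $A_1$, $A_1^2$ or $B_3$, the graph $\C(W, \hat X)$ is disconnected, so Lemma \ref{obvious} shows at once that $\C(\tilde W, X)$ is disconnected. For the three classes whose underlying class is of type $F_4$, the underlying class is that of the central involution $w_0 = -1$ of $W$, so Lemma \ref{obvious} is unavailable. Here I would note that in each case the parabolic $W_I$ has rank $4$ and so spans $\rr^4$; hence its longest element $w_I$ has linear part $-1 = w_0$, and the representative has the form $(w_0, \bu)$ with $w_0$ central. Lemma \ref{centralinv} then gives disconnectedness.

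This leaves the two classes with underlying type $B_2$, those of $(r_2 r_3)^2$ and of $r_3 r_5$, for which I would prove connectedness with diameter at most $4$ using Proposition \ref{alternative2}. Since $\C(W, \hat X)$ is connected of diameter $d = 2$, it is enough to produce $k = 2$. Fix a convenient representative $(a, \bu_0)$ with $a = s_{e_3}s_{e_4}$, taking $\bu_0 = \bo$ for the first class and (after conjugating $r_3 r_5$ into $W_{\{r_2, r_3\}}$) $\bu_0 = e_3 + e_4$ for the second. By Lemma \ref{lem28sep} every element of $X$ with linear part $a$ has the form $(a, \bu')$ with $\bu' \in \langle \alpha_2, \alpha_3 \rangle = \mathrm{span}(e_3, e_4)$. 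The structural key is that the $B_2$ class of $W(F_4)$ also contains the involution $c = s_{e_1}s_{e_2}$, whose support is orthogonal to that of $a$; conjugating the representative by a coordinate permutation (which lies in $W(F_4)$) produces some $(c, \bw) \in X$ with $\bw \in \mathrm{span}(e_1, e_2)$. Because $a$ and $c$ commute and each fixes the translation space of the other, the commuting condition $(\bu')^c + \bw = \bw^a + \bu'$ reduces to $\bu' + \bw = \bw + \bu'$ and so holds automatically; thus $(c, \bw)$ commutes with every such $(a, \bu')$. This gives a path $(a, \bu') \leftrightarrow (c, \bw) \leftrightarrow (a, \bu_0)$ of length at most $2$, so $k = 2$ and the diameter is at most $d + k = 4$.

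I expect the connected case to be the main obstacle; the disconnectedness arguments are short once one observes that the three $F_4$-underlying classes have central linear part. The real work is to pin down the translation constraints correctly via Lemma \ref{lem28sep} and to verify that the orthogonal involution $c = s_{e_1}s_{e_2}$, carrying an appropriate translation, genuinely lies in $X$ and commutes with all elements of $X$ that share the linear part $a$. It is also worth recording the reason a length-$2$ detour is unavoidable: a direct computation gives $(a, \bu')(a, \bu_0) = (1, \bu_0 - \bu')$, so two distinct elements with the same linear part never commute, and any connecting path must pass through an element whose linear part differs from $a$, which is exactly the role played by $c$.
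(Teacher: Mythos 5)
Your proof is correct, and its overall skeleton matches the paper's (identical disconnectedness arguments via Lemmas \ref{obvious} and \ref{centralinv}, then Propositions \ref{alternative}/\ref{alternative2} for the two $B_2$-underlying classes), but your mechanism for the connected cases is genuinely different. The paper's key trick is that if $x=(a,\bu)$ and $y=(c,\bw)$ are involutions with $ac$ acting as $-1$ on $V$, then $xy=(ac,\bv)$ is automatically an involution whatever the translation parts, so $x\leftrightarrow y$; this needs no control of $\bu$ or $\bw$, and for the class of $(r_2r_3)^2$ it gives $k=1$ in Proposition \ref{alternative} and hence the sharper bound $\diam\C(\tilde W,X)\leq 3$, while for the class of $r_3r_5$ it is combined with Lemma \ref{4.3.5} to get $k=2$ in Proposition \ref{alternative2}. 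You instead never use Lemma \ref{4.3.5}: you invoke Lemma \ref{lem28sep} to pin every element with linear part $a=s_{e_3}s_{e_4}$ to a translation in $\mathrm{span}(e_3,e_4)$, and then verify commuting with a single hub $(c,\bw)$, $c=s_{e_1}s_{e_2}$, $\bw\in\mathrm{span}(e_1,e_2)$, directly from the orthogonality of fixed spaces. This buys a uniform treatment of both classes (one application of Proposition \ref{alternative2} with $k=2$, $d=2$) and the structurally stronger statement that all elements sharing the linear part $a$ have a \emph{common} neighbour; the cost is the weaker bound $4$ rather than $3$ for the $(r_2r_3)^2$ class (harmless, since the theorem only asserts diameter at most $4$) and a dependence on the translation constraints that the paper's involution trick avoids. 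Your observation that distinct elements with equal linear part never commute, forcing any path to detour through a different linear part, is correct and is implicit in the paper only through the proof of Lemma \ref{centralinv}. One small point of care: your phrase ``conjugating $r_3r_5$ into $W_{\{r_2,r_3\}}$'' is loose, since the representative $(s_{e_3}s_{e_4},e_3+e_4)$ does not lie in the finite parabolic $\langle r_2,r_3\rangle$; but your argument only needs \emph{some} representative with linear part $a$, whose translation part Lemma \ref{lem28sep} then confines to $\mathrm{span}(e_3,e_4)$, so nothing breaks.
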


  \begin{proof} 
  	Let $X$ be a conjugacy class in $\tilde W$. If the underlying class in $W$ is anything other than type $B_2$ or $F_4$, then $\C(\tilde W, X)$ is disconnected, by Lemma \ref{obvious}. If the underlying class is type $F_4$ then $\C(\tilde W, X)$ is disconnected by Lemma \ref{centralinv}. So we are reduced to the case where the underlying class in $W$ is type $B_2$. There are two classes in $\tilde W$ where this happens, one containing $(r_2r_3)^2 = ((s_{\alpha_2}s_{\alpha_3})^2, \textbf{0})$, and one containing $r_3r_5 = ((s_{\alpha_2}s_{\alpha_3})^2, \tilde\alpha^\vee)$.
  	
  	Let $X$ be the conjugacy class of $(r_2r_3)^2$ in $\tilde W$; its underlying class $\hat X$ in $W$ is of type $B_2$. Note that $\hat X$ contains $((s_{\alpha_2}s_{\alpha_3})^2 = s_{e_3}s_{e_4}$ and thus also $s_{e_1}s_{e_2}$ (for example via the conjugating element $s_{e_1-e_3}s_{e_2 - e_4}$). In 
  	Proposition~\ref{alternative}, set $a = s_{e_1}s_{e_2}$ and $\hat x = s_{e_3}s_{e_4}$. Suppose $(a, \bu) \in X$. Now $(a,\bu)(\hat x, \bo) = (s_{e_1}s_{e_2}s_{e_3}s_{e_4}, \bv)$ for the appropriate $\bv$. This is clearly an involution because $s_{e_1}s_{e_2}s_{e_3}s_{e_4}$ acts as $-1$ on $Z$. Thus $(\hat x, \bo)$ commutes with $(a\,bu)$. Therefore we can apply Proposition~\ref{alternative} with $k=1$ and $d=2$, to see that $\C(\tilde W,X)$ is connected with diameter at most 3.\\
  	Now let $X$ be the conjugacy class of $r_3r_5$ in $ \tilde W$; again its underlying class in $W$ is of type $B_2$. Let $(a, \bu), (a, \bu') \in X$, where again $a = s_{e_3}s_{e_4}$. By Lemma \ref{4.3.5} there is some $\bv \in Z$ for which $(a, \bu) \leftrightarrow (s_{e_1}s_{e_2}, \bv)$. But now $(a, \bu')(s_{e_1}s_{e_2}, \bv) = (s_{e_1}s_{e_2}s_{e_3}s_{e_4}, \bv')$ for some $\bv'$, which is an involution. Thus in Proposition \ref{alternative2} we have $k=2$ and $d=2$, meaning that $\C(\tilde W, X)$ is connected with diameter at most 4.
  \end{proof}

    The Coxeter graph of type $\tilde G_2$ is as follows.

  \begin{center}  	\unitlength 1.00mm
    	\linethickness{0.4pt}
    	\begin{picture}(58.00,10)(-10,4)
    	
    	\put(10,9.00){\line(1,0){12}}
    	\put(10,9.00){\circle*{2.00}}
    	\put(22,9.00){\circle*{2.00}}
    	\put(34,9.00){\circle*{2.00}}
    	\put(16,12){\makebox(0,0)[cc]{$6$}}
    	\put(10,14){\makebox(0,0)[cc]{$r_{1}$}}
    	\put(22,14){\makebox(0,0)[cc]{$r_{2}$}}
    	\put(32,14){\makebox(0,0)[cc]{$r_{3}$}}
    	\put(-10,10){\makebox(0,0)[lc]{$\tilde G_2$}}  
    	\put(22,9.00){\line(1,0){12}}
    	\end{picture}\end{center}

 The subgraphs corresponding to parabolic subgroups $W_I$ for which $w_I$ is central  are of types $A_1$, $A_1
  	\times A_1$ or $G_2$.  It turns out that in all these cases, the commuting involution graphs are disconnected.
  	
  \begin{prop} 
  Let $X$ be a conjugacy class in the affine Coxeter group $\tilde W$ of type $\tilde{G_2}$. Then $C(\tilde W,X)$ is disconnected.
  \end{prop}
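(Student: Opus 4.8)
The plan is to classify the involution conjugacy classes of $\tilde W$ according to their underlying class in $W = W(G_2)$ and then dispatch each possibility using one of the general lemmas already established. By Theorem~\ref{richardson} every involution class of $\tilde W$ is represented by some $w_I$ with $w_I$ central in $W_I$, and, as noted just above the statement, the admissible $I$ give standard parabolics of type $A_1$, $A_1 \times A_1$ or $G_2$. I would treat these three types in turn. The one slightly delicate point — and the step I expect to be the crux — is the observation that, because $W(G_2)$ has rank $2$, \emph{both} the $G_2$ and the $A_1 \times A_1$ parabolics produce the central involution $-1$ of $W$ as their underlying element. Once this collapse to the centre is seen, the central-involution lemma finishes those two cases at once.

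For the type $G_2$ parabolic (from $I = \{r_1, r_2\}$), the longest element $w_I$ is the central involution of $W(G_2)$, which acts as $-1$ on $V$, so the corresponding class $X$ contains an element $(-1, \bu)$ and Lemma~\ref{centralinv} shows $\C(\tilde W, X)$ is disconnected. For the type $A_1 \times A_1$ parabolic (from $I = \{r_1, r_3\}$) I would compute $w_I = r_1 r_3 = (s_{\alpha_1} s_{\tilde\alpha}, \tilde\alpha^{\vee})$. Since $r_1$ and $r_3$ commute, Lemma~\ref{commute1} gives $\langle \alpha_1, \tilde\alpha\rangle = 0$; as $\alpha_1$ and $\tilde\alpha$ are then two orthogonal nonzero vectors spanning the $2$-dimensional space $V$, the product $s_{\alpha_1} s_{\tilde\alpha}$ acts as $-1$. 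Hence the underlying element is again the central involution of $W$, and Lemma~\ref{centralinv} applies.

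It remains to handle the type $A_1$ parabolics, where $w_I$ is a single (simple or affine) reflection and the underlying class $\hat X$ is a class of reflections in $W(G_2)$. Here I would argue that $\C(W, \hat X)$ is already totally disconnected and invoke Lemma~\ref{obvious}. Concretely, $G_2$ has exactly two reflection classes, the long and the short, each consisting of three reflections $s_\gamma$; a short computation — or the observation that the three long roots, and likewise the three short roots, sit at pairwise angles $60^\circ$ or $120^\circ$ in an $A_2$-type hexagon — shows that no two distinct roots of the same length in $G_2$ are orthogonal. By Lemma~\ref{commute1}, no two distinct reflections in $\hat X$ commute, so $\C(W, \hat X)$ is three isolated vertices, and Lemma~\ref{obvious} forces $\C(\tilde W, X)$ to be disconnected as well. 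Combining the three cases yields the claim. The whole argument is rank-$2$ bookkeeping, and the only place requiring genuine care is the orthogonality check underlying the observations flagged above.
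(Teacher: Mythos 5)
Your proposal is correct and takes essentially the same approach as the paper: both reduce each class $X$ to its underlying class $\hat X$ in $W(G_2)$ and dispatch via Lemma~\ref{obvious} (the reflection classes of $W$ give totally disconnected graphs, since no two distinct $G_2$-roots of equal length are orthogonal) and Lemma~\ref{centralinv} (when $\hat x$ is the central involution). The only difference is organisational: the paper observes directly that $\hat x$ is either a reflection or the central involution of the dihedral group of order 12, whereas you reach the same dichotomy by running through Richardson's parabolic types $A_1$, $A_1\times A_1$, $G_2$ and explicitly verifying that the $A_1 \times A_1$ case collapses to the centre of $W$.
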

  \begin{proof} 
When $\tilde W$ is of type $\tilde G_2$, the underlying Weyl group $W$ is dihedral of order 12. It has three conjugacy classes of involutions: two classes of reflections each with three elements, and one class consisting of the unique central involution $\tilde w$. The commuting involution graphs for the classes of reflections in $W$ are completely disconnected. Now let $x \in X$. If $\hat x$ is a reflection, then $\graph$ is disconnected by Lemma~\ref{obvious}. Otherwise, $\hat x = \tilde w$, and so the disconnectedness of \graph\ follows from Proposition \ref{centralinv}.
  \end{proof}

 \end{document}